\documentclass{amsart}

\usepackage[T1]{fontenc}
\usepackage[utf8]{inputenc}
\usepackage[a4paper,margin=1.1in]{geometry}
\usepackage{amsmath,amssymb,amsfonts,mathtools}
\usepackage{enumitem}
\usepackage{microtype}
\usepackage{comment}
\usepackage{mathrsfs}
\usepackage[colorlinks=true,linkcolor=blue,citecolor=blue,urlcolor=blue]{hyperref}

\numberwithin{equation}{section}

\newtheorem{theorem}{Theorem}[section]

\newtheorem{proposition}[theorem]{Proposition}
\newtheorem{lemma}[theorem]{Lemma}

\newtheorem{remark}[theorem]{Remark}

\newcommand{\R}{\mathbb R}

\newcommand{\cN}{\mathcal N}

\newcommand{\cF}{\mathcal F}

\newcommand{\omegainf}{\omega_\infty}

\newcommand{\ip}[2]{\left\langle #1,#2\right\rangle}
\newcommand{\norm}[1]{\left\|#1\right\|}

\title[]
{A new proof of uniqueness of ground state for pseudo-relativistic Hartree equation}
\author{Pan Chen, Xiaoyu Zeng}

\begin{document}
\maketitle

\noindent{ \bf Abstract.}
   We present a new proof of the uniqueness of ground
states for the pseudo-relativistic Hartree equation in the non-relativistic
regime. The proof treats the Euler-Lagrange equation and the mass constraint
as a single augmented system, with the Lagrange multiplier as an additional
unknown. The key point is the invertibility of the augmented linearization at
the non-relativistic ground state. 

\noindent{ \bf Keywords.} Uniqueness, Ground states, Non-relativistic limit, Pseudo-relativistic operator

\section{Introduction}

In this paper, we study standing waves solutions of the pseudo-relativistic Hartree equation
\begin{equation}\label{1.0}
    i\partial_t \psi
    =
    \big(\sqrt{-\Delta+m^2}-m\big)\psi
    -
    \big(|x|^{-1}*|\psi|^2\big)\psi,
    \qquad (t,x)\in \mathbb{R}\times\mathbb{R}^3,
\end{equation}
where $m>0$ is the rest mass.  The operator
$\sqrt{-\Delta+m^2}-m$ is defined by the Fourier multiplier
$\sqrt{|\xi|^2+m^2}-m$ and describes the relativistic kinetic energy,
whereas the Hartree term with kernel $|x|^{-1}$ represents Newtonian
attraction.

Standing waves of \eqref{1.0} are of the form
\[
    \psi(t,x)=e^{i\mu t}u(x),
\]
and $u$ solves
\begin{equation}\label{1.01}
    \big(\sqrt{-\Delta+m^2}-m\big)u +\mu u
    -
    \big(|x|^{-1}*|u|^2\big)u
    =0
    .
\end{equation}
The existence of solutions to \eqref{1.01} can be addressed by minimizing
 the energy functional
\begin{equation*}
    \mathcal{E}(u)
    :=
    \int_{\mathbb{R}^3}
        {u}\big(\sqrt{-\Delta+m^2}-m\big)u\,dx
    -
    \frac12
    \int_{\mathbb{R}^3}
        \big(|x|^{-1}*|u|^2\big)|u|^2\,dx
\end{equation*}
under the mass constraint $\|u\|_{L^2}^2=N$.  Equivalently, one studies
\begin{equation}\label{noneq}
    e(N)
    :=
    \inf\left\{
        \mathcal{E}(u):
        u\in H^{1/2}(\mathbb{R}^3),\ \|u\|_{L^2}^2=N
    \right\}.
\end{equation}
The parameter $N$ is the stellar mass of the corresponding boson star.
There exists a critical mass $N_*>0$ which corresponds to
 the Chandrasekhar limit mass, such
that minimizers exist exactly in the subcritical 
range $0<N<N_*$, see \cite{LiebYau1987} for more details.

The model has its origins in the work of Lieb and Thirring on
relativistic gravitational collapse \cite{LiebThirring1984} and in the
work of Lieb and Yau on the Chandrasekhar theory and the Hartree
approximation for bosonic stars \cite{LiebYau1987}.  A rigorous
mean-field derivation of the time-dependent boson-star equation was
obtained in \cite{ElgartSchlein2007}.  The analysis
of pseudo-relativistic Hartree equations has been developed in
many directions; see, for example,
\cite{ChoOzawa2006,Lenzmann2007,Lenzmann2009,FroehlichJonssonLenzmann2007,
FroehlichJonssonLenzmann2007b,FroehlichLenzmann2007,GuoZeng2020,
ChoiHongSeok2018,Chen,J. Bellazzini}.

An open problem in this theory is the Lieb--Yau uniqueness
conjecture: for every $0<N<N_*$, the variational problem \eqref{noneq}
should have a unique positive minimizer, up to phase and translation
\cite{LiebYau1987}.  This problem is delicate for two reasons.
First, both the kinetic operator and the Hartree interaction are
nonlocal, so the ODE methods available for local nonlinear
Schr\"odinger equations do not apply directly.  Second, the presence of
the rest mass $m>0$ breaks the simple scaling structure that would
otherwise connect ground states of different $L^2$-norm.

The first major progress was due to Lenzmann \cite{Lenzmann2009}, who
proved uniqueness in the small-mass regime $0<N\ll N_*$, apart from at
most countably many exceptional masses.  His argument combines the
implicit function theorem, the non-relativistic limit, and the
nondegeneracy of the ground state of the limiting Hartree equation.  The
exceptional set is tied to the differentiability of the ground-state
energy $e(N)$, which is used to identify the Lagrange multiplier as a
function of the mass.  Guo and Zeng later removed this exceptional set
and proved the small-mass case of the Lieb-Yau conjecture for all
sufficiently small $N>0$ \cite{GuoZeng2020}, using a Pohozaev-type
identity.  The full conjecture on the whole interval $0<N<N_*$ remains
open.

In order to formulate the small-mass problem in a normalized form, 
motivated by \cite{Lenzmann2009}, for $c>0$, we define the functional
\begin{equation*}
    \mathcal{E}_c(u)
    :=
    \left\langle
        \big(\sqrt{-c^2\Delta+m^2c^4}-mc^2\big)u,u
    \right\rangle
    -
    \frac12
    \int_{\mathbb{R}^3}
        \big(|x|^{-1}*u^2\big)u^2\,dx .
\end{equation*}
If $u$ is a minimizer of \eqref{noneq} with $N=c^{-1}$, then
$\widetilde u(x)=c^2u(cx)$ is a minimizer of
\begin{equation}\label{1.1}
    e_c
    :=
    \inf\left\{
        \mathcal{E}_c(u):
        u\in H^{1/2}(\mathbb{R}^3),\ \|u\|_{L^2}^2=1
    \right\}.
\end{equation}
Thus the small-mass limit $N\to0$ is equivalently described by the
non-relativistic limit $c\to\infty$ in the normalized problem
\eqref{1.1}.  Writing
\[
    P_c(D):=\sqrt{-c^2\Delta+m^2c^4}-mc^2,
    \qquad
    P_\infty(D):=-\frac{1}{2m}\Delta,
\]
we have $P_c(D)\to P_\infty(D)$ as $c\to\infty$, and the limiting
variational problem is
\begin{equation}\label{1.2}
    e_\infty
    =
    \inf_{\|u\|_{L^2}=1}
    \mathcal{E}_\infty(u),
\end{equation}
where
\[
    \mathcal{E}_\infty(u)
    =
    \left\langle P_\infty(D)u,u\right\rangle
    -
    \frac12
    \int_{\mathbb{R}^3}
        \big(|x|^{-1}*u^2\big)u^2\,dx .
\]
The Euler-Lagrange equations associated with \eqref{1.1} and
\eqref{1.2} are, respectively,
\begin{equation}\label{1.3}
    \begin{cases}
        P_c(D)u_c+\omega_c u_c=
        (|x|^{-1}*|u_c|^2)u_c,\\
        \|u_c\|_{L^2}=1,
    \end{cases}
\end{equation}
and
\begin{equation}\label{1.4}
    \begin{cases}
        P_\infty(D)u_\infty+\omega_\infty u_\infty=
        (|x|^{-1}*|u_\infty|^2)u_\infty,\\
        \|u_\infty\|_{L^2}=1,
    \end{cases}
\end{equation}
where $\omega_c$ and $\omega_\infty$ are the corresponding Lagrange
multipliers.  The limiting equation \eqref{1.4} is the classical
non-relativistic Hartree equation, also known as the Schr\"odinger-Newton
equation.  Existence and uniqueness of its ground state were proved in
\cite{Lieb1977,TodMoroz1999}, and its nondegeneracy was established in
\cite{Lenzmann2009,Wei}.  This nondegeneracy is also a fundamental property
in the study of solitary-wave dynamics; see
\cite{Lenzmann2009,FroehlichTsaiYau2002,
FroehlichGustafsonJonssonSigal2004,JonssonFroehlichGustafsonSigal2006}.

The purpose of this paper is to give a shorter and more structural proof
of the small-mass uniqueness theorem, or equivalently the uniqueness of
\eqref{1.1} for all sufficiently large $c$. Our main result can be stated as:
\begin{theorem}\label{main}
If $c > 0$ is large enough, then variational problem \eqref{1.1} admits
a unique positive minimizer, up to phase and translation.
\end{theorem}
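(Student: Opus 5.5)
We argue via an augmented implicit-function argument around the non-relativistic ground state $(u_\infty,\omegainf)$ of \eqref{1.4}. We work in the radial subspace $H^1_{rad}(\R^3)$ (or $H^2_{rad}$), using that positive minimizers of \eqref{1.1} may be taken radially symmetric and decreasing after translation, by rearrangement. The unknown $\omega$ is included as an independent variable. We define
\[
\cF(u,\omega;\varepsilon):=\Big(P_c(D)u+\omega u-(|x|^{-1}*u^2)u,\ \tfrac12(\norm{u}_{L^2}^2-1)\Big),\qquad \varepsilon=c^{-1},
\]
regarded as a map $H^2_{rad}\times\R\to L^2_{rad}\times\R$ (after multiplying by $(P_\infty(D)+1)^{-1}$ if needed, to get a map on $H^1_{rad}\times\R$). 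At $\varepsilon=0$ we have $P_c=P_\infty$ and $\cF(u_\infty,\omegainf;0)=0$.

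\textbf{Step 1: invertibility of the augmented linearization.} We show that
\[
\mathcal L_\infty(v,\sigma)=\Big(L_+v+\sigma u_\infty,\ \ip{u_\infty}{v}\Big),\qquad L_+=P_\infty(D)+\omegainf-(|x|^{-1}*u_\infty^2)-2(|x|^{-1}*(u_\infty v))\cdot,
\]
is an isomorphism on the radial spaces. The operator $L_+$ is Fredholm of index zero, and by the nondegeneracy of the Hartree ground state \cite{Lenzmann2009,Wei} it satisfies $\ker L_+=\mathrm{span}\{\partial_ju_\infty\}$. This kernel is trivial in the radial class, so $L_+$ is invertible there. Hence it suffices to show that $\mathcal L_\infty(v,\sigma)=0$ forces $v=0$, $\sigma=0$. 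If $\mathcal L_\infty(v,\sigma)=0$, then $v=-\sigma L_+^{-1}u_\infty$ and $\ip{u_\infty}{v}=0$, which reduces the problem to proving $\ip{L_+^{-1}u_\infty}{u_\infty}\neq0$.

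For this we use scaling. The family $u_\lambda=\lambda^2u_\infty(\lambda x)$ solves the equation with $\omega=\lambda^2\omegainf$. Differentiating in $\lambda$ at $\lambda=1$ gives $L_+(2u_\infty+x\cdot\nabla u_\infty)=-2\omegainf u_\infty$. Therefore
\[
\ip{L_+^{-1}u_\infty}{u_\infty}=-\tfrac{1}{2\omegainf}\ip{2u_\infty+x\cdot\nabla u_\infty}{u_\infty}=-\tfrac{1}{2\omegainf}\big(2-\tfrac32\big)\norm{u_\infty}_{L^2}^2\neq0.
\]
Since $\mathcal L_\infty$ is Fredholm of index zero (a compact perturbation of an invertible operator) and injective, it is an isomorphism.

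\textbf{Step 2: convergence of every minimizer.} Let $u_c$ be any positive radial minimizer of \eqref{1.1}, with multiplier $\omega_c$. We show that $(u_c,\omega_c)\to(u_\infty,\omegainf)$ in $H^1\times\R$ as $c\to\infty$. The argument has three parts. First, $e_c\to e_\infty$, using $P_c\le P_\infty$ together with a compactness argument in which $\langle P_c u_c,u_c\rangle$ controls the $H^1$ norm uniformly on low frequencies. Second, any subsequence of minimizers is a minimizing sequence for \eqref{1.2}, hence converges to the unique positive radial minimizer $u_\infty$ \cite{Lieb1977}. Third, the multiplier is recovered as $\omega_c=-\langle P_cu_c,u_c\rangle+\int(|x|^{-1}*u_c^2)u_c^2$, which converges to $\omegainf$. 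Upgrading to convergence in the topology of $\cF$ then follows from the equation by elliptic bootstrapping, using $\omega_c\ge\omegainf/2>0$.

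\textbf{Step 3: local uniqueness and conclusion.} We apply a quantitative inverse function argument to $\cF(\cdot,\cdot;\varepsilon)$ near $(u_\infty,\omegainf)$. The derivative $D\cF(u,\omega;\varepsilon)$ is close to $\mathcal L_\infty$ uniformly for $(u,\omega)$ in a small ball and $\varepsilon$ small. This is the main obstacle, because $P_c(D)-P_\infty(D)$ does not converge to zero in operator norm from $H^2$ to $L^2$. The symbol difference is $\approx|\xi|^4/(8m^3c^2)$ at low frequency and $\approx|\xi|^2/(2m)$ at high frequency. My first attempt would be to rewrite the equation as $u=(P_c(D)+\omega)^{-1}\big[(|x|^{-1}*u^2)u\big]$ and to use the fact that $(P_c+\omega)^{-1}\to(P_\infty+\omega)^{-1}$ in operator norm from $L^2$ to $L^2$. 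This holds because the symbols $(P_c(\xi)+\omega)^{-1}$ converge uniformly, since $P_c(\xi)\ge\min(|\xi|^2/(4m),c|\xi|/2)$. The nonlinearity is smooth from $H^{1/2}$, in fact from $L^2\cap L^{12/5}$, so the fixed-point formulation in $H^{1/2}_{rad}\times\R$ makes $D\cF_\varepsilon\to\mathcal L_\infty$ uniformly.

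With this uniform closeness, $\cF(\cdot,\cdot;\varepsilon)$ is injective on a fixed ball $B_\delta(u_\infty,\omegainf)$ for all small $\varepsilon$. Two positive radial minimizers with their multipliers both lie in this ball for $c$ large, by Step 2, and both are zeros of $\cF$, so they coincide. Undoing the rearrangement gives uniqueness of the positive minimizer up to phase and translation, which proves Theorem \ref{main}.
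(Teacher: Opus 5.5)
Your proposal follows essentially the paper's own proof. It uses the same augmented resolvent system $u=R_{\beta,\omega}\mathcal N(u)$, $\|u\|_{L^2}^2=1$ with $\omega$ as an unknown, proves invertibility of its linearization through the same scaling identity $\langle u_\infty,\mathcal L^{-1}u_\infty\rangle=-1/(4\omega_\infty)$ (Proposition \ref{invert2}), and gets local injectivity from uniform closeness of the derivative (Proposition \ref{prop}). The differences, such as working in $H^{1/2}_{rad}$ instead of $H^2_r$ or using a Fredholm argument instead of Lemma \ref{invert}, are cosmetic.

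The one soft spot is Step 2, which the paper simply cites as Proposition \ref{limit}. Your claim that the minimizers $u_c$ form a minimizing sequence for \eqref{1.2} is not immediate. The gap $\mathcal E_\infty(u_c)-e_c=\langle (P_\infty(D)-P_c(D))u_c,u_c\rangle$ only vanishes given uniform a priori bounds on $u_c$. So either cite the known non-relativistic limit result, or replace that step with a lower-semicontinuity argument applied to a weak limit.
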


The main point of our new proof is to treat the Euler-Lagrange equation and
the mass constraint as one augmented system.  More precisely, we regard
the Lagrange multiplier as an unknown parameter and solve
\[
    \mathcal F(\beta,u,\omega)
    =
    \left(
        u-R_{\beta,\omega}\mathcal N(u),
        \|u\|_{L^2}^2-1
    \right)
    =0,
    \qquad
    \beta=c^{-1},
\]
near the limiting solution $(0,u_\infty,\omega_\infty)$, where
\[
    \mathcal N(u)=\big(|x|^{-1}*u^2\big)u,
    \qquad
    R_{\beta,\omega}
    =
    \begin{cases}
        (P_\infty(D)+\omega)^{-1}, & \beta=0,\\
        (P_{1/\beta}(D)+\omega)^{-1}, & \beta>0.
    \end{cases}
\]
With this formulation the normalization condition is included in the
nonlinear map itself.  The linearization with respect to $(u,\omega)$ at
$(0,u_\infty,\omega_\infty)$ is
\[
    \partial_{(u,\omega)}\mathcal F(0,u_\infty,\omega_\infty)(\phi,\tau)
    =
    \left(
        R_{0,\omega_\infty}(\mathcal{L}\phi+\tau u_\infty),
        2\left\langle u_\infty,\phi\right\rangle
    \right),
\]
where \(\mathcal L\) is the linearization of the nonlinear Hartree equation \eqref{1.4} around the ground state
 \(u_\infty\), namely,
\[
    \mathcal{L}\phi
    :=
    P_\infty(D)\phi+\omega_\infty\phi-
    \mathcal N'(u_\infty)\phi .
\]
We prove that this augmented linearized operator is invertible by showing
\[
    \left\langle u_\infty,\mathcal L^{-1}u_\infty\right\rangle
    =
    -\frac{1}{4\omega_\infty}\neq0,
\]
 An
implicit-function-type argument then gives local uniqueness of solutions
to the augmented system near the non-relativistic ground state.

This approach differs from Lenzmann's argument in that the Lagrange
multiplier is not recovered from differentiability properties of the
energy curve $e(N)$, consequently no exceptional set of masses is
introduced.  It also differs from the method of Guo and Zeng, since we
do not need to analyze the limiting equation for the
$L^\infty$-normalized difference of two minimizers as $c\to\infty$.
Instead, the proof reduces the uniqueness statement to the invertibility
of the augmented linearization, proved below in Proposition
\ref{invert2}.

\textbf{Notations. }
Throughout this paper, we make use of the following notations.
\begin{itemize}
    \item $\|\cdot\|_{L^q}$ denotes the usual norm of the Lebesgue space $L^q\left(\mathbb{R}^3\right)$;
    \item $\|\cdot\|_{H^s}$ denotes the usual norm of the Sobolev space $H^s\left(\mathbb{R}^3\right)$;
	\item $\ip{\cdot}{\cdot}$ denotes the usual scalar product of 
	the Hilbert space $L^2\left(\mathbb{R}^3\right)$;
    \item $C$ is some positive constant that may change from line to line;
    \item \(\|P\|_{L(H,K)}\) denotes the operator
     norm of the bounded linear operator \(P\) 
     from the Banach space \(H\) to \(K\). 
     When \(H=K\), it is denoted 
     by \(\|P\|_{L(H)}\).
\end{itemize}

\section{Proof of Main Theorem}

For the
reader's convenience, we provide a concise 
summary of the properties of ground states to
 \eqref{1.3} and \eqref{1.4}, for more details, we refer reader to \cite{Lenzmann2009,ChoiHongSeok2018,Chen,GuoZeng2020}

\begin{proposition}
	\label{limit}
Let \(u_c\) (resp. \(u_\infty\)) be any ground state 
of the variational problem \eqref{1.1} (resp. \eqref{1.2}).
 Then, up to phase and translation, 
 both functions are positive and symmetric-decreasing. 
 Each satisfies its corresponding equation, 
 \eqref{1.3} or \eqref{1.4}, with a Lagrange multiplier
  \(\omega_c>0\) or \(\omega_\infty>0\), respectively.
   For any \(s>0\), \(u_c\) and \(u_\infty\) belong 
   to \(H^s(\mathbb{R}^3)\). Moreover, as \(c\to\infty\),
 \[
 u_c \to u_\infty \quad \text{in } H^s(\mathbb{R}^3),\qquad
 \omega_c \to \omega_\infty,\qquad e_c \to e_\infty.
 \]
\end{proposition}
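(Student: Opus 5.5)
The plan is to establish the listed properties in four stages: symmetry and positivity, the sign of the Lagrange multiplier, regularity, and the non-relativistic limit. The results are largely standard (Lieb, Lenzmann), so I will mostly assemble known facts.

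\emph{Symmetry, positivity, Euler--Lagrange equation.} Take a minimizer \(u_c\). Since \(\langle P_c(D)|u|,|u|\rangle\le\langle P_c(D)u,u\rangle\) (the kernel of \(e^{-tP_c(D)}\) is positive) and the Hartree term depends only on \(|u|\), \(|u|\) is again a minimizer. Replacing \(|u|\) by its symmetric decreasing rearrangement \(|u|^*\) does not increase the kinetic term (rearrangement inequality for \(\sqrt{-c^2\Delta+m^2c^4}\)) and does not decrease the Hartree term (Riesz rearrangement inequality). The strict versions of these inequalities (Lieb's strict Riesz inequality for \(|x|^{-1}\), and strictness for \(|\xi|\)-type multipliers) show that \(u\) equals \(|u|^*\) up to phase and translation. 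The Lagrange multiplier rule on the sphere \(\|u\|_{L^2}=1\) gives \eqref{1.3}. Positivity (rather than mere nonnegativity) follows by writing \(u_c=(P_c(D)+\omega_c)^{-1}[(|x|^{-1}*u_c^2)u_c]\): the resolvent has a strictly positive kernel, and the right-hand side is nonnegative and nonzero. The case \(u_\infty\) is identical.

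\emph{Sign of the multiplier.} Pair \eqref{1.3} with \(u_c\) to get \(\omega_c=-\langle P_cu_c,u_c\rangle+\int(|x|^{-1}*u_c^2)u_c^2=-e_c+\tfrac12\int(|x|^{-1}*u_c^2)u_c^2\). A dilation trial function shows \(e_c<0\) for large \(c\), since \(e_c\le e_\infty+o(1)\) and \(e_\infty<0\) by scaling. Hence \(\omega_c>0\), and likewise \(\omega_\infty>0\).

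\emph{Regularity.} Bootstrap in \(H^s\). The map \(u\mapsto(|x|^{-1}*u^2)u\) sends \(H^s\) into \(H^{s}\), and in fact gains regularity: by Hardy--Littlewood--Sobolev, \(|x|^{-1}*u^2\in L^\infty\cap\dot H^{s+2}\)-type spaces. The resolvent \((P_c+\omega_c)^{-1}\) gains one derivative (two for \(P_\infty\)). Starting from \(H^{1/2}\), induction then gives \(u\in H^s\) for every \(s\).

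\emph{Non-relativistic limit.} This is the main obstacle, since uniform-in-\(c\) bounds are needed. First show \(e_c\to e_\infty\). The upper bound uses \(P_c(\xi)\le |\xi|^2/2m\) with \(u_\infty\) as a trial function. The lower bound uses \(P_c(\xi)\ge \frac{|\xi|^2}{2m}-\frac{|\xi|^4}{8m^3c^2}\) on a cutoff, combined with the uniform \(H^{1/2}\) bound coming from \(N=c^{-1}<N_*\). Next obtain uniform \(H^1\) bounds. Since \(P_c(\xi)\ge \min(|\xi|^2/4m,\,c|\xi|/2)\), the minimizers are bounded in the corresponding energy space. The limiting energy inequality then shows that \(u_c\) is a minimizing sequence for \(e_\infty\). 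Radial symmetry and monotone decrease give compactness (Strauss-type decay), so along subsequences \(u_c\to\) some minimizer of \eqref{1.2}, which by Lieb's uniqueness is \(u_\infty\); hence the whole family converges. The convergence \(\omega_c\to\omega_\infty\) follows from the identity for \(\omega_c\) above. Finally, uniform \(H^s\) bounds are obtained by rerunning the bootstrap with constants independent of \(c\); this works because \((P_c+\omega_c)^{-1}\) is bounded from \(H^s\) to \(H^{s+1}\) uniformly once \(\omega_c\ge\omega_\infty/2\). Interpolating these bounds with the \(L^2\) convergence upgrades it to convergence in every \(H^s\).
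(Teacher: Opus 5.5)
The paper gives no proof of Proposition~\ref{limit}; it only quotes these facts from the works of Lenzmann, Choi--Hong--Seok, Chen and Guo--Zeng cited before the statement, so your outline has to stand on its own. Its qualitative parts are fine:
\begin{itemize}
\item passing to $|u|$ and then to $|u|^*$, where the strict Riesz inequality forces $|u|$ to be a translate of $|u|^*$, and equality in $\langle P_c|u|,|u|\rangle\le\langle P_cu,u\rangle$ fixes the phase;
\item the identity $\omega_c=-e_c+\frac12 D(u_c)$, with $D(u)=\int(|x|^{-1}*u^2)u^2$ and $e_c\le e_\infty<0$;
\item strict positivity from the positivity-improving resolvent, which uses $\omega_c>0$ and so must come after the sign step;
\item the $H^s$ bootstrap.
\end{itemize}

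The gap is in the step you yourself call the main obstacle: the uniform-in-$c$ bound. Subcriticality alone does not give a uniform $H^{1/2}$ bound. For $\|u\|_{L^2}=1$ the sharp inequality behind $N_*$ reads $\frac12 D(u)\le N_*^{-1}\langle u,|\nabla|u\rangle$. Combined with $P_c(\xi)\ge c|\xi|-mc^2$ and $e_c<0$, it only gives $\|u_c\|_{\dot H^{1/2}}^2\le mc^2/(c-N_*^{-1})$, which grows like $c$. Your bound $P_c\ge\min(|\xi|^2/4m,\,c|\xi|/2)$ is the right tool, but two things are missing:
\begin{itemize}
\item you never show how the Hartree term is absorbed into it;
\item the $c$-dependent energy space it controls is not uniformly $H^1$, since frequencies $|\xi|\gtrsim c$ are controlled only at the $\dot H^{1/2}$ level.
\end{itemize}
So the claim that $u_c$ is a minimizing sequence for $e_\infty$ is unsupported. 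It requires $\langle(P_\infty-P_c)u_c,u_c\rangle\to0$, and an $H^1$ bound does not give that.

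The repair uses only your own ingredients. By Hardy--Littlewood--Sobolev and Sobolev, $D(u)\lesssim\|u\|_{\dot H^{1/4}}^4$. Splitting at $|\xi|=2mc$ and applying H\"older in $\xi$ gives, for $\|u\|_{L^2}=1$ and $T:=\langle P_cu,u\rangle$,
\[
D(u)\le C\Bigl(\sqrt{mT}+\frac{T}{c}\Bigr).
\]
For $c$ large, $e_c<0$ then forces $T\le Cm$, which is a uniform $H^{1/2}$ bound. Since $\omega_c\ge -e_c\ge -e_\infty>0$, you do not need to wait for $\omega_c\to\omega_\infty$. Your uniform bootstrap immediately gives uniform $H^s$ bounds for every $s$. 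The global inequality $0\le P_\infty(\xi)-P_c(\xi)\le|\xi|^4/(8m^3c^2)$ then yields
\[
e_\infty\le\mathcal E_\infty(u_c)\le e_c+O(c^{-2})\le e_\infty+O(c^{-2}).
\]
This gives $e_c\to e_\infty$ and the minimizing-sequence property at once, with no cutoff needed. The rest finishes as you planned: compactness of radial decreasing functions, the scaling $\lambda^3e_\infty$ of the limiting infimum at mass $\lambda$ to rule out loss of mass, Lieb's uniqueness, the formula for $\omega_c$, and interpolation.
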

{
We recall that the unique positive radial ground state
$u_\infty$ to \eqref{1.4} is nondegenerate in $H^1_r(\mathbb{R}^3)$
(see \cite[Proposition 2]{Lenzmann2009} and \cite[Theorem III.1]{Wei}),
 that means the linearized equation 
 $$
 \mathcal{L}u=0
 $$
 has only the trivial solution $u=0$ in $H^1_r(\mathbb{R}^3)$.
 The precise knowledge implies  $\mathcal{L} : H^{s+2}_r(\mathbb{R}^3)\to H^{s}_r(\mathbb{R}^3)$ 
 is invertible (see \cite{Chen}).
}
\begin{lemma}\label{invert}
    For each $s\geq 0$, the linearized operator  
    $\mathcal{L} : H^{s+2}_r(\mathbb{R}^3)\to H^{s}_r(\mathbb{R}^3)$
    is
invertible.
\end{lemma}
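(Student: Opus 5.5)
The plan is to write $\mathcal L$ as a compact perturbation of an invertible operator and combine the Fredholm alternative with the nondegeneracy of $u_\infty$ recalled above. Set
\[
\mathcal L = (P_\infty(D)+\omegainf) - K,\qquad K\phi:=\cN'(u_\infty)\phi = \big(|x|^{-1}*u_\infty^2\big)\phi + 2u_\infty\big(|x|^{-1}*(u_\infty\phi)\big).
\]
Since $\omegainf>0$ (Proposition \ref{limit}), the Fourier multiplier $P_\infty(D)+\omegainf=-\frac1{2m}\Delta+\omegainf$ is an isomorphism $H^{s+2}\to H^s$ for every $s\ge0$. It commutes with rotations, so it preserves radial symmetry. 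We then write
\[
\mathcal L=(P_\infty(D)+\omegainf)\big(I-R_{0,\omegainf}K\big).
\]
Consequently it suffices to show that $I-R_{0,\omegainf}K$ is an isomorphism of $H^{s+2}_r$. The operator $K$ also maps radial functions to radial functions.

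\textbf{Step 1 (compactness).} We show that $R_{0,\omegainf}K:H^{s+2}_r\to H^{s+2}_r$ is compact. Because $u_\infty\in H^t$ for all $t$ and is positive, radially decreasing and decaying, the potential $V=|x|^{-1}*u_\infty^2$ is smooth, bounded with all derivatives, and tends to $0$ at infinity, with $V\sim |x|^{-1}$. Hence multiplication by $V$ maps $H^{s+2}\to H^{s}$ compactly after composing with the gain of two derivatives from $R_{0,\omegainf}$. The argument uses local Rellich compactness together with smallness of $V$ outside large balls, so that $V\chi_{|x|>R}$ has small operator norm.

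For the exchange term $\phi\mapsto 2u_\infty(|x|^{-1}*(u_\infty\phi))$ the situation is better. The map $\phi\mapsto |x|^{-1}*(u_\infty\phi)$ is bounded into $L^\infty$ with derivatives, by Hardy--Littlewood--Sobolev and the decay of $u_\infty$. Multiplication by the decaying smooth function $u_\infty$ then gives compactness by the same local-compactness and tail-smallness argument. Composing with the bounded map $R_{0,\omegainf}:H^s\to H^{s+2}$ yields compactness on $H^{s+2}_r$. I expect this to be the main technical step: the derivative bounds on the convolution terms must be checked at general $s$, and $V$ decays only like $|x|^{-1}$, so the tail smallness has to be handled carefully.

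\textbf{Step 2 (injectivity and Fredholm).} By the Fredholm alternative, $I-R_{0,\omegainf}K$ is invertible on $H^{s+2}_r$ if and only if it is injective. Suppose $\phi\in H^{s+2}_r$ satisfies $\phi=R_{0,\omegainf}K\phi$. Then $\mathcal L\phi=0$, and since $s+2\ge 1$ we have $\phi\in H^1_r$. The nondegeneracy of $u_\infty$ in $H^1_r$ (\cite[Proposition 2]{Lenzmann2009}, \cite[Theorem III.1]{Wei}) gives $\phi=0$.

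It follows that $\mathcal L$ is a bounded bijection $H^{s+2}_r\to H^s_r$. Its inverse is bounded either by the open mapping theorem or directly from the factorization above. Surjectivity onto $H^s_r$ is explicit: given $f\in H^s_r$, set $\phi=(I-R_{0,\omegainf}K)^{-1}R_{0,\omegainf}f$. This completes the proof of Lemma \ref{invert}.
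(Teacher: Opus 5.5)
Your proof is correct and follows the route the paper relies on. The paper gives no separate proof: it invokes the nondegeneracy of $u_\infty$ in $H^1_r(\mathbb{R}^3)$ and cites \cite{Chen} for the passage to invertibility. Your factorization $\mathcal L=(P_\infty(D)+\omega_\infty)(I-R_{0,\omega_\infty}K)$, combined with compactness of $R_{0,\omega_\infty}K$ and the Fredholm alternative, is the standard way to supply exactly that step.

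One technical point in the tail estimate for general $s$. You should use a smooth cutoff rather than the sharp indicator $\chi_{|x|>R}$. What you need is that every derivative $\partial^\alpha V=|x|^{-1}*\partial^\alpha(u_\infty^2)$ tends to $0$ at infinity; the $|x|^{-1}$ decay rate of $V$ plays no role. Alternatively, prove invertibility $H^2_r\to L^2_r$ only, then bootstrap regularity using that $K$ is bounded on each $H^t$.
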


Let's recall resolvent operator
\begin{equation}\label{beta}
   R_{\beta,\omega}=
   \begin{cases}
   (P_\infty(D)+\omega)^{-1},& \beta=0,\\[1mm]
   (P_{1/\beta}(D)+\omega)^{-1},& \beta>0.
   \end{cases}
\end{equation}
Using the Fourier transform, we obtain the following results.
\begin{lemma}\label{continuity}
Let $I\subset\subset(0,\infty)$ be compact. If $\beta\to0$ and $\omega\to\omega_0\in I$, then
\begin{equation}\label{c1}
   \norm{R_{\beta,\omega}-R_{0,\omega_0}}_{ L(H^s)}\to0
\end{equation}
and
\begin{equation}\label{c2}
   \norm{R_{\beta,\omega}^2-R_{0,\omega_0}^2}_{ L(H^s)}\to0
\end{equation}
for every $s\ge0$. 
\end{lemma}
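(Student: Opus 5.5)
Both $R_{\beta,\omega}$ and $R_{0,\omega_0}$ are Fourier multipliers, and they commute with $\langle D\rangle^s=(1+|D|^2)^{s/2}$. Hence for every $s\ge0$ the operator norm on $H^s$ equals the $L^\infty$ norm of the symbol difference. With $c=1/\beta$ and $p_c(\xi):=\sqrt{c^2|\xi|^2+m^2c^4}-mc^2$, $p_\infty(\xi)=|\xi|^2/(2m)$, we therefore need to show
\[
\sup_{\xi\in\R^3}\left|\frac{1}{p_c(\xi)+\omega}-\frac{1}{p_\infty(\xi)+\omega_0}\right|\to0,
\]
and the analogous statement for the squares. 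The argument is independent of $s$.

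\textbf{Elementary facts about the symbols.} Rationalizing gives
\[
p_c(\xi)=\frac{c^2|\xi|^2}{\sqrt{c^2|\xi|^2+m^2c^4}+mc^2}=\frac{|\xi|^2}{\sqrt{|\xi|^2/c^2+m^2}+m}.
\]
From this we read off three facts:
\begin{enumerate}
\item $0\le p_c(\xi)\le p_\infty(\xi)$.
\item $p_c(\xi)$ is increasing in $c$, so $p_c\ge p_1$ for $c\ge1$.
\item $p_\infty(\xi)-p_c(\xi)=|\xi|^2\,\frac{\sqrt{|\xi|^2/c^2+m^2}-m}{2m\,(\sqrt{|\xi|^2/c^2+m^2}+m)}\le \frac{|\xi|^4}{4m^3c^2}$.
\end{enumerate}
Let $\omega_{\min}:=\tfrac12\min I>0$. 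For $\omega$ close to $\omega_0$ we have $\omega\ge\omega_{\min}$, so both denominators are at least $\omega_{\min}$.

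\textbf{Splitting the difference.} Write
\[
\frac{1}{p_c+\omega}-\frac{1}{p_\infty+\omega_0}=\frac{(p_\infty-p_c)+(\omega_0-\omega)}{(p_c+\omega)(p_\infty+\omega_0)}.
\]
The $\omega_0-\omega$ part is bounded by $|\omega-\omega_0|/\omega_{\min}^2\to0$. The remaining term needs more care, because $p_\infty-p_c$ is not uniformly small in $\xi$. This is the main (if mild) obstacle, and we handle it by splitting frequencies at a threshold $|\xi|=K$.
\begin{itemize}
\item For $|\xi|\le K$, fact 3 gives the bound $K^4/(4m^3c^2\omega_{\min}^2)$.
\item For $|\xi|>K$, we do not use the difference at all. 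Instead we bound each resolvent symbol separately: $1/(p_c+\omega)\le 1/p_1(\xi)$ for $c\ge1$, and $1/(p_\infty+\omega_0)\le 2m/K^2$. Both tend to $0$ as $K\to\infty$ uniformly in $c\ge1$, because $p_1(\xi)\to\infty$ as $|\xi|\to\infty$.
\end{itemize}
Given $\varepsilon>0$, first choose $K$ so that the high-frequency part is below $\varepsilon$, then let $c\to\infty$. This proves \eqref{c1}.

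\textbf{The squares.} For \eqref{c2}, use $a^2-b^2=(a-b)(a+b)$ at the symbol level, with $a=1/(p_c+\omega)$ and $b=1/(p_\infty+\omega_0)$. Both $a$ and $b$ are bounded by $1/\omega_{\min}$, so
\[
\sup_{\xi}|a^2-b^2|\le \frac{2}{\omega_{\min}}\sup_{\xi}|a-b|\to0.
\]
Equivalently, at the operator level, $R^2-R_0^2=R(R-R_0)+(R-R_0)R_0$ together with the uniform bound $\|R_{\beta,\omega}\|_{L(H^s)}\le 1/\omega_{\min}$. This proves \eqref{c2}.
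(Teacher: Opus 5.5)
Your proof is correct and follows essentially the same route as the paper: you reduce to the supremum of the symbol difference, rationalize $p_c$, use the uniform lower bound $p_c\ge p_1$ (the paper's $q$) to control frequencies $|\xi|>K$, and use convergence of the symbols on $|\xi|\le K$. Your explicit low-frequency bound of order $|\xi|^4/c^2$ and the factorization $a^2-b^2=(a-b)(a+b)$ for the squares just make precise what the paper states as uniform convergence on compact sets and ``similarly''.
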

{
\begin{proof}
   For \(\beta>0\), the Fourier symbol of
\(P_{1/\beta}(D)\) is
\[
p_\beta(\xi)
=
\sqrt{\beta^{-2}|\xi|^2+m^2\beta^{-4}}-m\beta^{-2}
=
\frac{|\xi|^2}{\sqrt{m^2+\beta^2|\xi|^2}+m}.
\]
For \(\beta=0\), the limiting symbol is
\[
p_0(\xi)=\frac{|\xi|^2}{2m}.
\]
Thus
\[
\widehat{R_{\beta,\omega}f}(\xi)
=
r_{\beta,\omega}(\xi)\widehat f(\xi),
\qquad
r_{\beta,\omega}(\xi):=\frac{1}{p_\beta(\xi)+\omega}.
\]
Then, we have
\[
\|(R_{\beta,\omega}-R_{0,\omega_0})f\|_{H^s}
\le
\sup_{\xi\in\mathbb R^3}
|r_{\beta,\omega}(\xi)-r_{0,\omega_0}(\xi)|
\,\|f\|_{H^s}.
\]
It is therefore enough to prove
\[
\sup_{\xi\in\mathbb R^3}
|r_{\beta,\omega}(\xi)-r_{0,\omega_0}(\xi)|
\to0.
\]
Choose \(a>0\) such that
\[
\omega\ge a,\qquad \omega_0\ge a.
\]
 Put \(t=|\xi|^2\), for \(0\le \beta\le1\),
\[
p_\beta(\xi)
=
\frac{t}{\sqrt{m^2+\beta^2t}+m}
\ge
\frac{t}{\sqrt{m^2+t}+m}
=:q(t),
\]
and \(q(t)\to\infty\) as \(t\to\infty\). Hence, if \(t\ge T\), then
\[
|r_{\beta,\omega}(\xi)-r_{0,\omega_0}(\xi)|
\le
\frac{1}{p_\beta(\xi)+\omega}
+
\frac{1}{p_0(\xi)+\omega_0}
\le
\frac{2}{q(T)+a}.
\]
Given \(\varepsilon>0\), choose \(T\gg1\), such that
\[
\frac{2}{q(T)+a}<\frac{\varepsilon}{2}.
\]
On  compact region \(|\xi|^2\le T\),
\[
p_\beta(\xi)
=
\frac{|\xi|^2}{\sqrt{m^2+\beta^2|\xi|^2}+m}
\to
\frac{|\xi|^2}{2m}
=
p_0(\xi)
\]
uniformly as \(\beta\to0\). Therefore,
\[
\sup_{|\xi|^2\le T}
|p_\beta(\xi)-p_0(\xi)|\to0.
\]
Thus, for \(\beta\) sufficiently small and \(\omega\) sufficiently close to \(\omega_0\),
\[
\begin{aligned}
\sup_{\xi\in\mathbb R^3}|r_{\beta,\omega}(\xi)-r_{0,\omega_0}(\xi)|
&=\sup_{\xi\in\mathbb R^3}
\left|
\frac{1}{p_\beta(\xi)+\omega}
-
\frac{1}{p_0(\xi)+\omega_0}
\right|        \\
&\le \varepsilon,
\end{aligned}
\]
which proves \eqref{c1}. Similarly, we can obtain \eqref{c2}.
\end{proof}

The following trilinear estimate for the Hartree nonlinearity follows from \cite{ChoiHongSeok2018}.

\begin{lemma}\label{trilinear}
For $s\geq \frac{1}{2}$, $u,v,w\in H^s(\mathbb{R}^3)$,
 there exists \(C>0\) such that
\[
\|(|x|^{-1}*uv)w\|_{H^s}
\le
C\|u\|_{H^s}\|v\|_{H^s}\|w\|_{H^s}.
\]
\end{lemma}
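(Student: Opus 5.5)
We need to prove the trilinear estimate $\|(|x|^{-1}*uv)w\|_{H^s}\le C\|u\|_{H^s}\|v\|_{H^s}\|w\|_{H^s}$ for $s\ge\frac12$.

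We split the argument into two steps: an $L^\infty$ bound on the Coulomb potential, and then a Leibniz-type (Kato--Ponce) estimate for $(|x|^{-1}*uv)w$.

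\textbf{Step 1: the case $s=\tfrac12$.} Write $V=|x|^{-1}*(uv)$. Kato's inequality gives $\int |x-y|^{-1}|f(y)|^2\,dy\le \frac{\pi}{2}\langle f,|D|f\rangle$, uniformly in $x$. By Cauchy--Schwarz, $\int |x-y|^{-1}|u(y)v(y)|\,dy$ is bounded by the geometric mean of the same quantities for $|u|$ and $|v|$. Hence
\[
\|V\|_{L^\infty}\le C\|u\|_{H^{1/2}}\|v\|_{H^{1/2}}.
\]
The derivative of $V$ must also be controlled. Write $V=4\pi|D|^{-2}(uv)$. Then $|D|^{1/2}V$ is, up to a constant, $|D|^{-3/2}(uv)=c\,|x|^{-3/2}*(uv)$. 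We estimate this by Hardy--Littlewood--Sobolev. The Sobolev embedding $H^{1/2}\subset L^3$ gives $uv\in L^{3/2}$. HLS with $\alpha=3/2$ then maps $L^{3/2}$ to $L^{q}$ with $1/q=2/3-1/2=1/6$, so $|D|^{1/2}V\in L^6$. Similarly $|D|V=c|x|^{-2}*(uv)\in L^3$, since $1/q=2/3-1/3$.

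We then apply the fractional Leibniz rule
\[
\||D|^{1/2}(Vw)\|_{L^2}\lesssim \|V\|_{L^\infty}\||D|^{1/2}w\|_{L^2}+\||D|^{1/2}V\|_{L^6}\|w\|_{L^3},
\]
together with $\|w\|_{L^3}\lesssim\|w\|_{H^{1/2}}$ and $\|Vw\|_{L^2}\le\|V\|_{L^\infty}\|w\|_{L^2}$. This gives the claim for $s=\tfrac12$.

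\textbf{Step 2: the case $s>\tfrac12$.} Here we use $\langle D\rangle^s$ and the Kato--Ponce inequality
\[
\|\langle D\rangle^s(Vw)\|_{L^2}\lesssim \|V\|_{L^\infty}\|w\|_{H^s}+\|\langle D\rangle^s V\|_{L^{p}}\|w\|_{L^{r}}.
\]
We choose $(p,r)=(6,3)$ when $s\le 1$. For larger $s$ we split the derivative on $V$ as $\langle D\rangle^{s}V\sim V+|D|^{s}V$. Then
\[
|D|^sV=c\,|D|^{s-2}(uv)=c\,|D|^{-3/2}\bigl(|D|^{s-1/2}(uv)\bigr).
\]
The inner factor $|D|^{s-1/2}(uv)$ is estimated in $L^{3/2}$ by the fractional Leibniz rule. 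Each factor of the form $|D|^{s-1/2}u$ lies in $L^2$, while the undifferentiated factor lies in $L^6$ or $L^\infty$ where needed. For $s>\tfrac32$ the algebra property of $H^s$ makes this easy; for intermediate $s$ we use the Sobolev embeddings. HLS then places $|D|^sV$ in $L^6$, and pairing with $w\in L^3$ closes the estimate. Alternatively, since the paper only uses large $s$, once $s>\tfrac32$ one can simply use the algebra property $\|Vw\|_{H^s}\lesssim \|V\|_{W^{s,\infty}}\|w\|_{H^s}$, with $\nabla^k V$ controlled in $L^\infty$ by splitting the kernel into its near and far parts.

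\textbf{Main obstacle.} The delicate part is the low-frequency behaviour of $V$. Since $|x|^{-1}*(uv)$ is not in $L^2$, we must work only with homogeneous derivatives of $V$ and its $L^\infty$ norm, never with $\|V\|_{H^s}$. The exponent bookkeeping in the Kato--Ponce/HLS step must keep every quantity in a space controlled by $H^s$ norms. This is precisely the content of \cite{ChoiHongSeok2018}, which we would cite for the full range of $s$.
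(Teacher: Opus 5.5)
The paper gives no argument for this lemma; it only says the estimate follows from the Choi--Hong--Seok reference. Your proposal supplies an actual proof, and it is correct. It bounds $V=|x|^{-1}*(uv)$ in $L^\infty$ via Kato's inequality. It controls the homogeneous derivatives $|D|^{\sigma}V=c\,|D|^{\sigma-2}(uv)$ by Hardy--Littlewood--Sobolev. It then distributes the $s$ derivatives on $Vw$ with the Kato--Ponce rule. The citation buys brevity. Your route makes the lemma self-contained modulo standard harmonic-analysis tools, and it shows why $s=\frac12$ is the threshold (Kato's inequality and $H^{1/2}\subset L^3$) and how the non-$L^2$ low frequencies of $V$ are avoided.

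A few bookkeeping points should be tightened.
\begin{itemize}
\item If you use the inhomogeneous $\langle D\rangle^sV$ in $L^6$, you also need $\|V\|_{L^6}$. This holds because $uv\in L^{6/5}$ and $|D|^{-2}:L^{6/5}\to L^6$. It is simpler, though, to use the homogeneous Leibniz rule plus $\|Vw\|_{L^2}\le\|V\|_{L^\infty}\|w\|_{L^2}$, exactly as in your Step 1.
\item For $\frac12<s\le1$ you omit the check that $|D|^sV\in L^6$. This requires $uv\in L^{6/(5-2s)}$, i.e.\ $u,v\in L^{12/(5-2s)}$, which lies in the Sobolev range of $H^s$ precisely when $s\ge\frac12$.
\item For $s\ge1$ the split
\[
\||D|^{s-1/2}(uv)\|_{L^{3/2}}\lesssim \||D|^{s-1/2}u\|_{L^2}\|v\|_{L^6}+\|u\|_{L^6}\||D|^{s-1/2}v\|_{L^2},
\]
together with $H^1\subset L^6$, closes uniformly in $s$. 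So no separate treatment of $s>\frac32$ is needed.
\item The phrase ``or $L^\infty$'' should be dropped. With target $L^{3/2}$, an $L^\infty$ undifferentiated factor would force the differentiated factor into $L^{3/2}$, which $H^s$ does not control.
\item Your $W^{s,\infty}$ shortcut needs care for non-integer $s$. At $s=2$, the only value the paper actually uses, it works as you describe.
\end{itemize}
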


Let's recall the augmented system
\begin{equation*}
   \cF(\beta,u,\omega)
   =\left(u-R_{\beta,\omega}\cN(u),\ \|u\|_{L^2}^2-1\right),
\end{equation*}
where
\[
   \cF:[0,\beta_0)\times H^2_r(\mathbb{R}^3)\times I\to  H^2_r(\mathbb{R}^3)\times\R
\]
for a compact interval $I\subset\subset(0,\infty)$ containing $\omega_\infty$. For $\beta>0$, the equation $\cF(\beta,u,\omega)=0$ is equivalent to
\begin{equation}\label{eq:eq-beta-positive}
   P_{1/\beta}u+\omega u=\cN(u),
   \qquad \|u\|_{L^2}=1.
\end{equation}
For $\beta=0$, $\omega= \omegainf$, it is the limiting equation \eqref{1.4}.
Direct computation yields the Gâteaux derivative of $\mathcal{F}(\beta, u,\omega)$ with respect to $(u, \omega)$ is
\[
\partial_{(u,\omega)}\mathcal F(\beta,u,\omega)(\phi,\tau)
=
\left(
\phi
-
R_{\beta,\omega}\mathcal N'(u)\phi
+
\tau R_{\beta,\omega}^2\mathcal N(u),
\,
2\langle u,\phi\rangle
\right).
\]
It follows from Lemma \ref{trilinear},
\[
\|\mathcal N(u_1)-\mathcal N(u_2)\|_{H^2}
\le
C\bigl(\|u_1\|_{H^2}^2+\|u_2\|_{H^2}^2\bigr)
\|u_1-u_2\|_{H^2},
\]
and
\[
\|\mathcal N'(u_1)-\mathcal N'(u_2)\|_{L(H^2)}
\le
C\bigl(\|u_1\|_{H^2}+\|u_2\|_{H^2}\bigr)
\|u_1-u_2\|_{H^2}.
\]
Moreover, by Lemma \ref{continuity},
\[
R_{\beta,\omega}
\to
R_{0,\omega_\infty}, \quad
R_{\beta,\omega}^2
\to
R_{0,\omega_\infty}^2
\quad
\text{in }\quad  L(H^2)
\]
as
\[
\beta\to0,
\qquad
\omega\to\omega_\infty.
\]
Therefore
\[
\left\|\partial_{(u,\omega)}\mathcal{F}(\beta,u,\omega)
-
\partial_{(u,\omega)}\mathcal{F}(0,u_\infty,\omega_\infty)\right\|_{ L\left(
H^2_r(\mathbb R^3)\times\mathbb R
\right)}\to 0
\]
as
\[
\beta\to0,\quad \omega\to\omega_\infty,\quad
u\to u_\infty\,\, \text{ in } \,\,H^2.
\]
Consequently, since the Gâteaux derivative \(\partial_{(u,\omega)}\mathcal{F}(\beta,u,\omega)\) is continuous at \((0, u_\infty, \omega_\infty)\), it follows that the Fréchet derivative of \(\mathcal{F}(\beta, u,\omega)\) at \((0, u_\infty, \omega_\infty)\) with respect to \((u, \omega)\) exists and is given by
\[
\partial_{(u,\omega)}\mathcal F(0,u_\infty,\omega_\infty)(\phi,\tau)
=
\left(
\phi
-
R_{0,\omega_\infty}\mathcal N'(u_\infty)\phi
+
\tau R_{0,\omega_\infty}^2\mathcal N(u_\infty),
\,
2\langle u_\infty,\phi\rangle
\right).
\]
Since \(u_\infty\) satisfies the equation
\[
P_\infty(D)u_\infty+\omega_\infty u_\infty
=
\mathcal N(u_\infty),
\]
we have
\begin{equation}\label{DF}
  \partial_{(u,\omega)}\mathcal F(0,u_\infty,\omega_\infty)(\phi,\tau)
=
\left(
R_{0,\omega_\infty}(\mathcal L\phi+\tau u_\infty),
\,
2\langle u_\infty,\phi\rangle
\right). 
\end{equation}

}

\begin{proposition}\label{invert2}
The operator
\begin{equation*}
   \mathscr{L}:=\partial_{(u,\omega)}\cF(0,u_\infty,\omega_\infty):H_r^2(\mathbb{R}^3)\times\R\to H_r^2(\mathbb{R}^3)\times\R
\end{equation*}
is invertible.
\end{proposition}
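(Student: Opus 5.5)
Using formula \eqref{DF}, I will show that for every $(f,\sigma)\in H^2_r(\mathbb R^3)\times\R$ the system
\[
R_{0,\omega_\infty}(\mathcal L\phi+\tau u_\infty)=f,\qquad 2\langle u_\infty,\phi\rangle=\sigma
\]
has exactly one solution $(\phi,\tau)\in H^2_r\times\R$, and that this solution depends boundedly on the data. Since $R_{0,\omega_\infty}$ is an isomorphism $H^s_r\to H^{s+2}_r$, the first equation is equivalent to $\mathcal L\phi+\tau u_\infty=g$, where $g:=(P_\infty(D)+\omega_\infty)f\in L^2_r$.

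By Lemma \ref{invert} with $s=0$, this gives $\phi=\mathcal L^{-1}g-\tau\mathcal L^{-1}u_\infty\in H^2_r$. Here $u_\infty$ is radial and lies in every $H^s$ by Proposition \ref{limit}. Substituting into the constraint yields
\[
2\langle u_\infty,\mathcal L^{-1}g\rangle-2\tau\langle u_\infty,\mathcal L^{-1}u_\infty\rangle=\sigma .
\]
So the augmented operator is bijective exactly when $\langle u_\infty,\mathcal L^{-1}u_\infty\rangle\neq0$. In that case $\tau$ is uniquely determined, and hence so is $\phi$. Boundedness of the inverse comes for free from these explicit formulas, since $\mathcal L^{-1}\in L(L^2_r,H^2_r)$. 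Alternatively, $\mathscr L$ is a bounded bijection, so the open mapping theorem applies.

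The main step is to compute $\langle u_\infty,\mathcal L^{-1}u_\infty\rangle$, and for this I will use the scaling family. For $\lambda>0$, the function $u_\lambda(x)=\lambda^2u_\infty(\lambda x)$ satisfies
\[
P_\infty(D)u_\lambda+\lambda^2\omega_\infty u_\lambda=\mathcal N(u_\lambda),
\]
because both the Laplacian term and the Hartree term scale like $\lambda^4$. Differentiating at $\lambda=1$ gives $\mathcal L\,\partial_\lambda u_\lambda|_{\lambda=1}=-2\omega_\infty u_\infty$, where
\[
\partial_\lambda u_\lambda\big|_{\lambda=1}=2u_\infty+x\cdot\nabla u_\infty\in H^2_r .
\]
This uses the decay and regularity of $u_\infty$, for instance exponential decay of the ground state, so that $x\cdot\nabla u_\infty\in H^2$. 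Hence
\[
\mathcal L^{-1}u_\infty=-\frac{1}{2\omega_\infty}\bigl(2u_\infty+x\cdot\nabla u_\infty\bigr).
\]
Integrating by parts gives $\langle u_\infty,x\cdot\nabla u_\infty\rangle=-\tfrac32\|u_\infty\|_{L^2}^2=-\tfrac32$. Therefore
\[
\langle u_\infty,\mathcal L^{-1}u_\infty\rangle=-\frac{1}{2\omega_\infty}\Bigl(2-\frac32\Bigr)=-\frac{1}{4\omega_\infty}\neq0,
\]
since $\omega_\infty>0$. This matches the value announced in the introduction.

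The only delicate point is justifying the differentiation in $\lambda$, that is, verifying that $\partial_\lambda u_\lambda$ lies in the domain $H^2_r$ and that differentiating the equation term by term is legitimate. I would handle this in two ways. First, the map $\lambda\mapsto u_\lambda$ is $C^1$ into $H^2$ provided $x\cdot\nabla u_\infty\in H^2$, which follows from the regularity and decay of $u_\infty$ cited in Proposition \ref{limit} and the references there. Second, one can bypass the limit entirely: compute $\mathcal L(x\cdot\nabla u_\infty)$ directly using the commutators $[\Delta,x\cdot\nabla]=2\Delta$ and the corresponding homogeneity identity for $|x|^{-1}*$, and then check that $\mathcal L(2u_\infty+x\cdot\nabla u_\infty)=-2\omega_\infty u_\infty$.
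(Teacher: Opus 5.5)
Your proposal is correct and follows the paper's proof essentially step for step. You reduce to $\mathcal L\phi+\tau u_\infty=(P_\infty(D)+\omega_\infty)f$, invert $\mathcal L$ by Lemma \ref{invert}, and fix $\tau$ using $\langle u_\infty,\mathcal L^{-1}u_\infty\rangle=-\frac{1}{4\omega_\infty}$, which you compute by differentiating the scaling family $\lambda^2u_\infty(\lambda x)$ at $\lambda=1$. Your explicit check that $x\cdot\nabla u_\infty\in H^2_r$, which the paper leaves implicit, is a worthwhile addition.
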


\begin{proof}
It is enough to solve
\begin{equation*}
   \mathscr{L}(\phi,\tau)=(f,a),
   \qquad (f,a)\in H_r^2(\mathbb{R}^3)\times\R.
\end{equation*}
By \eqref{DF}, this is equivalent to
\begin{equation}\label{lineq}
   \mathcal{L} \phi+\tau u_\infty=(P_\infty(D)+\omega_\infty)f,
   \qquad
   2\langle u_\infty,\phi\rangle=a.
\end{equation}
By Lemma \ref{invert},
\begin{equation}\label{linereq}
   \phi=\mathcal{L} ^{-1}(P_\infty(D)+\omega_\infty)f-\tau \mathcal{L} ^{-1}u_\infty.
\end{equation}
The second equation in \eqref{lineq} determines $\tau$ uniquely provided
\begin{equation}\label{eq0}
   \ip{u_\infty}{\mathcal{L} ^{-1}u_\infty}\neq0.
\end{equation}
We now prove \eqref{eq0}. For $\lambda>0$ define
\begin{equation}\label{eq:scaling}
   u_\infty^\lambda(x):=\lambda^2u_\infty(\lambda x).
\end{equation}
then
\begin{equation}\label{eq}
   P_\infty(D) u_\infty^\lambda+\lambda^2\omega_\infty u_\infty^\lambda
   =\cN(u_\infty^\lambda).
\end{equation}
Differentiating \eqref{eq} at $\lambda=1$ yields
\begin{equation*}
   \mathcal{L}  v=-2\omega_\infty u_\infty,
   \qquad
   v:=x\cdot\nabla u_\infty+2u_\infty.
\end{equation*}
Consequently
\begin{equation}\label{2.15}
   \mathcal{L} ^{-1}u_\infty=-\frac{1}{2\omega_\infty}v.
\end{equation}
Since $\norm{u_\infty}_2=1$,
\begin{equation}\label{eq:pairingZ}
   \ip{u_\infty}{x\cdot\nabla u_\infty}
   =-\frac32\norm{u_\infty}_2^2=-\frac32,
\end{equation}
so
\begin{equation}\label{2.17}
   \ip{u_\infty}{v}=\frac12.
\end{equation}
Combining \eqref{2.15} and \eqref{2.17} gives
\begin{equation*}
   \ip{u_\infty}{\mathcal{L} ^{-1}u_\infty}
   =-\frac{1}{2\omega_\infty}\ip{u_\infty}{v}
   =-\frac{1}{4\omega_\infty}\neq0.
\end{equation*}
Therefore $\tau$ is uniquely determined by
\[
   2\ip{u_\infty}{\mathcal{L} ^{-1}(P_\infty+\omega_\infty)f}
   -2\tau\ip{u_\infty}{\mathcal{L} ^{-1}u_\infty}=a,
\]
and then $\phi$ is uniquely determined by \eqref{linereq}. This proves that $\mathscr{L}$ is bijective. 
By the open mapping theorem, the inverse operator \(\mathscr{L}^{-1}\) is continuous.
\end{proof}

\
\begin{proposition}\label{prop}
There exist $\beta_1>0$ and $\delta>0$ such that, for every $0\le\beta\le\beta_1$, the equation
\begin{equation}\label{F=0}
   \cF(\beta,u,\omega)=0
\end{equation}
has at most one solution $(u,\omega)\in H^2_r(\mathbb{R}^3)\times I$ provided
\begin{equation}\label{loc}
   \norm{u-u_\infty}_{H^2}+|\omega-\omega_\infty|<\delta.
\end{equation}
\end{proposition}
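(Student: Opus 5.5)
We prove local uniqueness with a quantitative contraction argument built on Proposition \ref{invert2}. This avoids needing full $C^1$ regularity of $\cF$ in $\beta$, which is problematic at $\beta=0$. Write $X:=H^2_r(\mathbb{R}^3)\times\R$ with norm $\norm{(\phi,\tau)}_X=\norm{\phi}_{H^2}+|\tau|$, and set $\mathscr{L}=\partial_{(u,\omega)}\cF(0,u_\infty,\omega_\infty)$. Suppose $(u_1,\omega_1)$ and $(u_2,\omega_2)$ both solve \eqref{F=0} for the same $\beta$ and both satisfy \eqref{loc}. Subtracting the two equations gives
\[
0=\cF(\beta,u_1,\omega_1)-\cF(\beta,u_2,\omega_2)=\int_0^1\partial_{(u,\omega)}\cF\big(\beta,u_t,\omega_t\big)\,dt\,(u_1-u_2,\omega_1-\omega_2),
\]
where $(u_t,\omega_t)=(1-t)(u_2,\omega_2)+t(u_1,\omega_1)$. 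The segment lies in the $\delta$-ball, and $\omega_t\in I$ provided $\delta$ is smaller than the distance from $\omega_\infty$ to $\partial I$.

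To justify the mean value formula, note that for fixed $\beta$ the map $(u,\omega)\mapsto\cF(\beta,u,\omega)$ has a Gâteaux derivative given by the formula preceding \eqref{DF}. This derivative is continuous in $(u,\omega)$ on the ball. Indeed, the estimates on $\cN$ and $\cN'$ from Lemma \ref{trilinear} give continuity in $u$. Continuity in $\omega$ for fixed $\beta$ follows from the resolvent identity $R_{\beta,\omega}-R_{\beta,\omega'}=(\omega'-\omega)R_{\beta,\omega}R_{\beta,\omega'}$ together with the uniform bound $\norm{R_{\beta,\omega}}_{L(H^2)}\le 1/\min I$. Hence $t\mapsto\cF(\beta,u_t,\omega_t)$ is $C^1$ and the fundamental theorem of calculus applies in $H^2\times\R$.

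Write $A_\beta:=\int_0^1\partial_{(u,\omega)}\cF(\beta,u_t,\omega_t)\,dt$. We showed above that
\[
\norm{\partial_{(u,\omega)}\cF(\beta,u,\omega)-\mathscr{L}}_{L(X)}\to0
\quad\text{as}\quad \beta\to0,\ \omega\to\omega_\infty,\ u\to u_\infty \text{ in } H^2.
\]
This uses Lemma \ref{continuity} for $R_{\beta,\omega}$ and $R_{\beta,\omega}^2$, together with the Lipschitz bounds for $\cN$ and $\cN'$. Therefore we can choose $\beta_1,\delta>0$ such that
\[
\norm{\partial_{(u,\omega)}\cF(\beta,u,\omega)-\mathscr{L}}_{L(X)}\le\frac{1}{2\norm{\mathscr{L}^{-1}}_{L(X)}}
\]
whenever $0\le\beta\le\beta_1$ and \eqref{loc} holds. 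Integrating over $t$ gives the same bound for $\norm{A_\beta-\mathscr{L}}_{L(X)}$.

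By Proposition \ref{invert2}, $\mathscr{L}^{-1}$ is bounded. A Neumann series then shows that $A_\beta=\mathscr{L}\big(I+\mathscr{L}^{-1}(A_\beta-\mathscr{L})\big)$ is injective, with $\norm{A_\beta z}_X\ge\frac{1}{2\norm{\mathscr{L}^{-1}}}\norm{z}_X$. Applying this to $z=(u_1-u_2,\omega_1-\omega_2)$ with $A_\beta z=0$ gives $u_1=u_2$ and $\omega_1=\omega_2$.

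The main obstacle is the uniformity of the derivative estimate in the regime $\beta\to0$ jointly with $\omega$. In particular, $R^2_{\beta,\omega}\to R^2_{0,\omega_\infty}$ must hold in operator norm uniformly in $\beta$. This is exactly what \eqref{c1} and \eqref{c2} provide, since the symbols converge uniformly in $\xi$. With that in hand, the rest is the standard Neumann-series argument, and no differentiability in $\beta$ is needed.
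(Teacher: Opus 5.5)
Your proposal is correct and follows the paper's proof essentially step for step. Both arguments use the mean-value integral along the segment joining the two solutions, the smallness of $\partial_{(u,\omega)}\cF(\beta,u,\omega)-\mathscr{L}$ near $(0,u_\infty,\omega_\infty)$ uniformly for small $\beta$, and the bounded inverse $\mathscr{L}^{-1}$ from Proposition \ref{invert2} to force $(u_1-u_2,\omega_1-\omega_2)=0$. Your additional justification of the fundamental theorem of calculus — operator-norm continuity of the derivative in $(u,\omega)$ for fixed $\beta$, via the resolvent identity — fills in a step the paper passes over quickly.
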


\begin{proof}
 By the continuity of $\partial_{(u,\omega)}\cF$ at $(0,u_\infty,\omega_\infty)$, there are $\beta_1>0$ and $\delta>0$ such that, whenever $0\le\beta\le\beta_1$ and \eqref{loc} holds,
\begin{equation*}
   \norm{\mathscr{L}^{-1}\left(\partial_{(u,\omega)}\cF(\beta,u,\omega)-\mathscr{L}\right)}_{ L(H^2_r\times\R)}
   \le \frac12.
\end{equation*}
Suppose that $(u_1,\omega_1)$ and $(u_2,\omega_2)$ are two solutions of \eqref{F=0} satisfying \eqref{loc}. Set
\[
   h:=u_1-u_2,
   \eta:=\omega_1-\omega_2,
\]
and 
\[
   (u_t,\omega_t):=(u_2,\omega_2)+t(h,\eta),
   \qquad 0\le t\le1.
\]
If $\delta$ is chosen smaller if necessary, by Lemma \ref{continuity}, we have
\begin{equation}\label{2.22}
   \norm{\mathscr{L}^{-1}\left(\partial_{(u,\omega)}\cF(\beta,u_t,\omega_t)-\mathscr{L}\right)}_{ L(H^2_r\times\R)}
   \le \frac12, \quad 0\leq t\leq 1.
\end{equation}
It follows from $\cF(\beta,u_1,\omega_1)=\cF(\beta,u_2,\omega_2)=0$ that
\begin{equation*}
   0=\int_0^1\partial_{(u,\omega)}\cF(\beta,u_t,\omega_t)(h,\eta)\,dt.
\end{equation*}
Thus
\begin{equation}\label{2.24}
   \mathscr{L}(h,\eta)
   =-\int_0^1\left(\partial_{(u,\omega)}\cF(\beta,u_t,\omega_t)-\mathscr{L}\right)(h,\eta)\,dt.
\end{equation}
Applying $\mathscr{L}^{-1}$ to \eqref{2.24}, and using \eqref{2.22}, we obtain
\[
   \norm{(h,\eta)}_{H^2\times\R}
   \le \frac12\norm{(h,\eta)}_{H^2\times\R}.
\]
Therefore $h=0$ and $\eta=0$, which proves the proposition.
\end{proof}
{
\begin{remark}
One can show that $\mathcal{F}$ is $C^\infty$ in
$(\beta,u,\omega)$ for $\beta>0$, the implicit function theorem implies
that the unique local branch
\[
F(\beta,u_\beta,\omega_\beta)=0
\]
is $C^\infty$ for $0<\beta<\beta_1$. Hence $c\to(u_c,\omega_c)$ is $C^\infty$ for $c\gg1$, and so
\[
e_c=\mathcal E_c(u_c)
\]
is $C^\infty$ for $c\gg1$.
Using the scaling relation $e(N)=N^3e_{1/N}$, we also obtain that
 $N\to e(N)$ is $C^\infty$ for $0<N\ll N_*$.
\end{remark}
}

\begin{proof}[\textbf{Proof of Theorem \ref{main}}]
By Proposition \ref{limit}, for sufficiently large $c>0$,
 any symmetric-decreasing ground state $u_c$ of \eqref{1.3}
  satisfies the assumptions of Proposition \ref{prop}.
   Hence, we conclude that the symmetric-decreasing ground
    state $u_c$ of the variational problem \eqref{1.1} is
     unique, provided $c$ is large enough.
\end{proof}

\noindent {Pan Chen\\
School of Mathematical Sciences,\\
 Shanghai Jiao Tong University, Shanghai 200240, P.R. China
\\
e-mail: chenpan2020@amss.ac.cn }
\medskip\\
\noindent {Xiaoyu Zeng\\
Center for Mathematical Sciences and Department of Mathematics,\\
University of Technology, Wuhan, 430070, P.R. China\\
e-mail: xyzeng@whut.edu.cn}

\end{document}